\documentclass[12pt,a4paper]{amsart}
 \pdfoutput=1
 
 \usepackage{amsmath,amssymb,amsthm}  
 \usepackage{mathtools}
 \usepackage{thmtools}
 \usepackage{float} 
 
 \usepackage{xcolor} 	
 \usepackage{hyperref}
 \hypersetup{
 	colorlinks,
     linkcolor={red!60!black},
     citecolor={green!60!black},
     urlcolor={blue!60!black},
 }
 \usepackage[abbrev, msc-links]{amsrefs}
 \usepackage{comment}
 \usepackage{todonotes}
 
 \usepackage[utf8]{inputenc}
 \usepackage[T1]{fontenc}
 \usepackage{lmodern}
 \usepackage[babel]{microtype}
 \usepackage[english]{babel}

 \linespread{1.19}
 \usepackage{geometry}
 \geometry{left=25mm,right=25mm, top=22.5mm, bottom=22.5mm}

 \usepackage{enumitem}

\theoremstyle{plain}
\newtheorem{thm}{Theorem}[section]

\newtheorem{claim}[thm]{Claim}
\newtheorem{cor}[thm]{Corollary}
\newtheorem{lem}[thm]{Lemma}

\newtheorem{obs}[thm]{Observation}

\newtheorem*{thm*}{Theorem}
\newtheorem*{cor*}{Corollary}

\theoremstyle{definition}

\newtheorem{exm}[thm]{Example}

\title{Cutting a cake for infinitely many guests}

\author{Zsuzsanna  Jankó}
\thanks{Jankó is grateful for the support of  NKFIH 
OTKA-K128611}
\address{Zsuzsanna  Jankó,  Department of Operations Research and Actuarial Sciences,  Corvinus University of Budapest,  
Budapest, Hungary;
Institute of Economics, Centre for Economic and Regional Studies, 
Budapest, Hungary}
\email{zsuzsanna.janko@uni-corvinus.hu, janko.zsuzsanna@krtk.hu}
\author{Attila Jo\'{o}}
\thanks{Joó would like to thank the generous support of the Alexander 
von Humboldt Foundation and NKFIH 
OTKA-129211}
\address{Attila Jo\'{o},
Department of Mathematics, University of Hamburg,  Hamburg, Germany; Set theory, logic and topology research division, 
Alfr\'{e}d R\'{e}nyi Institute of Mathematics,  Budapest, Hungary}
\email{attila.joo@uni-hamburg.de, jooattila@renyi.hu}

\keywords{fair division, cake cutting, last diminisher}
\subjclass[2020]{Primary: 91B32, 91A07  Secondary: 68W30} 
\begin{document}

\begin{abstract}
Fair division with unequal shares is an intensively studied resource allocation problem. For  $ i\in [n] $, let $ \mu_i $ be an 
atomless probability measure on the measurable space $(C,\mathcal{S})  $ and let $ t_i  $ be positive numbers (entitlements) 
with $ \sum_{i=1}^{n}t_i=1 $. A fair division  is a partition of  $ C $  into sets $ S_i\in \mathcal{S} $ with $ \mu_i(S_i)\geq t_i $ 
for every $ i\in [n] $. 

We introduce new algorithms to solve the fair division problem with  irrational entitlements. They are based on the classical Last 
diminisher technique and we believe that they are simpler than the known methods. Then we show that a fair division always 
exists even for infinitely many players.
\end{abstract}
\maketitle

\section{Introduction}
Cake cutting is a metaphor of  the distribution of some  inhomogeneous continuos goods  and is intensively 
investigated by not just 
mathematicians but economists and political scientists as well. The preferences of the players $ P_i $ involved in the sharing 
 are usually  represented as  atomless  probability
measures $ \mu_i $ defined on a common  $ \sigma $-algebra $ \mathcal{S}\subseteq \mathcal{P}(C) $ of the possible `slices' of 
the `cake' $ C $. One option of how a division  can be ``good'' is \emph{proportionality}. This 
means that each of the $ n $ players gets at least one $ n $th of the cake  according to their own measurement, i.e. $ 
C=\bigsqcup_{i=1}^{n}S_i $ with $ \mu_i(S_i) \geq \frac{1}{n} $. The division is called \emph{strongly proportional} if all 
these inequalities are strict. For $ n=2 $ a proportional division can be found by the so called  ``Cut and choose''  procedure. This 
was used by Abraham and Lot in the Bible to share Canaan.   Abraham divided Canaan into 
two parts which have equal value for him and then Lot chose his favourite among these two parts  leaving Abraham the other one. 
For a general $ n $,  Steinhaus challenged his students Banach and Knaster to find a solution that they successfully accomplished 
by 
developing the so called  ''Last Diminisher'' procedure (see \cite{steihaus1948problem}). In this method $ P_1 $ picks a slice $ 
T_1 $ with $\mu_1(T_1) 
=\frac{1}{n} $. If $ \mu_2(T_1)>\frac{1}{n} $, then $ P_2 $ diminishes $ T_1 $ in the sense that  he takes an $ T_2\subseteq 
T_1 $ with $\mu_2(T_2) =\frac{1}{n} $, otherwise he lets $ T_2:=T_1 $. They proceed similarly and slice $ T_n $ is allocated  
to the player who lastly diminished or to $ P_1 $ if nobody did so.  Then the remaining cake worth at least $ \frac{n-1}{n} $ for 
each 
of the remaining $ n-1 $ players and they can continue using the same protocol. 

A natural extension 
of the concept of proportional division is the so called ``fair division with unequal shares''.  In this variant there 
are entitlements $ t_i >0 $  associated to the 
players satisfying $ \sum_{i} t_i=1 $. A division is called (strongly) \emph{fair} if the slice $ S_i $ given to player $P_i $  worths 
for him 
at least (more than) $ t_i $, i.e. $ \mu_i(S_i) \geq t_i  $ ($ \mu_i(S_i) > t_i  $) holds for each $ i $. If all of these entitlements are 
rational numbers, 
say $ \frac{p_1}{q},\dots, \frac{p_n}{q} $, then a fair division according to them can be reduced to a proportional division 
problem for $ \sum_{i=1}^{n} p_i $ players where measure $ \mu_i $ is ``cloned'' to $ p_i $ copies.  In the presence of 
irrational 
entitlements such a ``player-cloning`` argument is no more applicable. 

Several finite procedures were developed to find a (strongly) fair 
division allowing irrational entitlements. In the special case of the problem where $ C=[0,1] $, $ \mathcal{S} $ is the Borel $ \sigma $-algebra and 
the measures $ \mu_i $ are 
absolute continues, Shishido and Zeng developed and algorithm in \cite{shishido1999mark}. In their protocol the players choose intervals that 
worth the same and exchange these intervals among each other exploiting the possible differences of their evaluations. A more recent 
algorithm in the same model but based on completely different ideas was given by Cseh and Fleiner in \cite{cseh2020complexity}. In a general 
step they reduce  the problem to two sub-problems in one of which the number of players is smaller by one while in the other all the 
entitlements are rational and the number of players remains the same. 

Our first contribution (Section \ref{sec: Last dim}) is two  procedures solving the fair division problem with potentially irrational entitlements 
which we believe are simpler 
than the known methods. We keep working in the general settings we have already introduced which originated from Barbanel  
(see \cite{barbanel1995game}). Our aim is to demonstrate that `Last diminisher'-type of ideas are already powerful enough to design simple finite 
procedures.  We provide two 
algorithms  both of which 
solves the 
problem in finitely many steps. The  first one reduces  the problem  to another one in which either 
the number of players is smaller by one or all the entitlements are rationals and the  number of players is the same. This can be considered a direct 
improvement of the algorithm given in Section 7 of \cite{cseh2020complexity}.  Taking 
rational numbers from  non-degenerate intervals (which was used in the 
algorithms given in \cites{woodall1986note, cseh2020complexity}) is necessary in this procedure. In our second algorithm not even such a rational 
approximation is 
needed.

It was shown in \cite{dubins1961cut} based on Lyapunov’s 
theorem that if not all the measures are identical, then a strongly 
 proportional division exists. A constructive proof was obtained later in \cite{woodall1986note} which was then further 
 developed for the case of unequal shares (i.e. strong fairness) in \cite{barbanel1995game}. We point out in Section 
 \ref{sec: strong fair}  that the 
 strongly fair division problem (for potentially infinitely many players) can be actually reduced to the fair division problem in a 
 completely elementary way, which 
reduction we need later. 
 
 In the last section (Section \ref{sec: inf}) we 
consider the (strongly) fair division 
problem for infinitely many players. Rational entitlements do not  make this problem easier since representing them with a 
common 
denominator is  impossible in general. Since the entitlements sum up to $ 1 $, they must converge to $ 0 $,
thus extension of protocols in which 
one need to start with the smallest positive entitlement (like the one given in  \cite{cseh2020complexity}) is 
problematic. By Last Diminisher-type of methods we are facing in addition the  difficulty that diminishing infinitely often might 
be necessary 
in which case 
no ``last diminisher'' exists, moreover, we may end up with the empty set as a limit of the iterated trimmings. Eliminating one 
player and using induction for the rest is also not applicable for obvious reasons. Although the so called Fink protocol (see 
\cite{fink1964note}) can be considered as such a player-eliminating recursive algorithm,  it inspired our procedure that finds a 
fair division for infinitely many players:

\begin{restatable}{thm}{cake}\label{t: main cake}
Assume that  $ (C, \mathcal{S}) $ is a measurable space and for $ i\in \mathbb{N} $,  $ 
\mu_i $ is an atomless probability
measure  defined on $ \mathcal{S} $ and $ t_i $ is a positive number such that $ \sum_{i=0}^{\infty}t_i=1 $. Then there 
is a partition $ C=\bigsqcup_{i=0}^{\infty}S_i $ such that 
$ 
S_i\in \mathcal{S} $ with $ \mu_i(S_i)\geq t_i $ for each $ i\in \mathbb{N} $. Furthermore, if not all the $ \mu_i $ are 
identical, 
then  `$ \mu_i(S_i)\geq t_i $' can be strengthened to `$ \mu_i(S_i)> t_i $' for every $ i\in \mathbb{N} $.
\end{restatable}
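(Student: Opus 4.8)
The plan is to obtain the ``furthermore'' assertion from the reduction of strong fairness to fairness established in Section~\ref{sec: strong fair}, so it suffices to produce a partition $C=\bigsqcup_i S_i$ with $\mu_i(S_i)\geq t_i$; this I would do by a Fink-type procedure (cf.~\cite{fink1964note}) in which the players enter one after another. Write $\sigma_n:=\sum_{j=0}^{n}t_j$ and $\alpha_n:=t_n/\sigma_n\in(0,1]$, so that $\alpha_0=1$, every $\alpha_n<1$ for $n\geq 1$, and $\sum_{n\geq1}\alpha_n\leq t_0^{-1}\sum_{n\geq1}t_n<\infty$, whence $\prod_{m>n}(1-\alpha_m)>0$ for each $n$. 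I would maintain, at the end of stage $n$, a finite partition $C=\bigsqcup_{i=0}^{n}A_i^{n}$ with $A_i^{n}\subseteq A_i^{n-1}$ for $i<n$ and with $\mu_i(A_i^{n})=t_i/\sigma_n$ for every $i\leq n$.

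Stage $0$ is simply $A_0^0:=C$. At stage $n\geq 1$ the newcomer $P_n$ trims a proportional sliver off every existing slice: for each $i<n$ choose, by Lyapunov's convexity theorem applied to $\mu_i$ and $\mu_n$ (or by the diminisher-type cuts developed in Section~\ref{sec: Last dim}), a set $B_i\subseteq A_i^{n-1}$ with $\mu_i(B_i)=\alpha_n\mu_i(A_i^{n-1})$ and $\mu_n(B_i)=\alpha_n\mu_n(A_i^{n-1})$, and then put $A_i^{n}:=A_i^{n-1}\setminus B_i$ for $i<n$ and $A_n^{n}:=\bigsqcup_{i<n}B_i$. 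Since $1-\alpha_n=\sigma_{n-1}/\sigma_n$, each earlier player retains exactly a $t_i/\sigma_n$ share, while $P_n$ collects $\mu_n(A_n^n)=\alpha_n\sum_{i<n}\mu_n(A_i^{n-1})=\alpha_n\mu_n(C)=t_n/\sigma_n$; hence the invariant is preserved.

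It remains to pass to the limit. For fixed $i$ the sets $A_i^{n}$ ($n\geq i$) decrease, so $S_i^{\infty}:=\bigcap_{n\geq i}A_i^{n}\in\mathcal{S}$, and $\mu_i(S_i^{\infty})=\lim_n t_i/\sigma_n=t_i$ since $\sigma_n\to 1$. These sets are pairwise disjoint (because $A_i^{k}\cap A_k^{k}=\emptyset$ for $i<k$), but their union may omit the measurable set $L:=C\setminus\bigcup_i S_i^{\infty}$ of points relinquished to a newcomer at infinitely many stages. Absorbing it, i.e.\ setting $S_0:=S_0^{\infty}\cup L$ and $S_i:=S_i^{\infty}$ for $i\geq 1$, yields a genuine partition $C=\bigsqcup_i S_i$ with $\mu_i(S_i)\geq t_i$ for all $i$, which is the first assertion; the strengthening then follows from Section~\ref{sec: strong fair}.

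I expect two points to need care. The first — the obstacle flagged in the introduction — is that a single point may be handed to \emph{every} successive newcomer, so the natural limit slices $S_i^{\infty}$ need not cover $C$; this is exactly what forces the bookkeeping of the leftover set $L$. The second is calibrating the shrinking fractions: the slices of the already-present players must not degenerate, which requires $\prod_{m>n}(1-\alpha_m)>0$, yet each $P_n$ must still receive $\mu_n$-value at least $t_n$ — and the choice $\alpha_n=t_n/\sigma_n$ is what reconciles these, since it makes $\sum_n\alpha_n$ finite while leaving $P_n$ the share $t_n/\sigma_n\geq t_n$ at stage $n$. (If one prefers to avoid Lyapunov's theorem, $P_n$ may instead grab from each $A_i^{n-1}$ a $\mu_n$-heaviest subset of $\mu_i$-value $\alpha_n\mu_i(A_i^{n-1})$, or apply the finite diminisher procedure of Section~\ref{sec: Last dim} inside each existing slice.)
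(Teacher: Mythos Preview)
Your argument is correct and follows essentially the same Fink-type scheme as the paper: the $(n{+}1)$st player trims a fraction $t_{n+1}/\sigma_{n+1}$ (the paper's $t_{n+1}^{n+1}$) from each existing slice, the resulting sequences $A_i^n$ are nested, and the intersections give the limit allocation with the leftover absorbed into one piece. The only real difference is that you invoke Lyapunov's convexity theorem to maintain the exact equalities $\mu_i(A_i^n)=t_i/\sigma_n$, whereas the paper keeps only the inequalities $\mu_i(S_i^n)\geq t_i/\sigma_n$ and obtains each trimming step from a plain two-player fair division (Cut and Choose) inside $S_i^n$---which is precisely the elementary alternative you mention in your final parenthesis. (Your other alternative, taking a ``$\mu_n$-heaviest'' subset of prescribed $\mu_i$-value, would need a separate existence argument and the bound $\mu_n(B_i)\geq\alpha_n\mu_n(A_i^{n-1})$ is not immediate without something like Lyapunov; the two-player fair division route is the clean way to avoid this.)
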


Let us mention  that cake cutting problems have a huge literature and this particular model and notion of fairness that we consider 
is only a tiny fragment of it. About the so called  exact, envy-free and equitable divisions (none of which are extendable to 
infinitely many players for obvious reasons) and the corresponding  
existence results  
a brief but informative survey can be found in \cite{cheze2017existence}.  For a more general picture about  this field, 
including completely different mathematical 
models of the problem, we refer 
to \cite{barbanel2005geometry}, \cite{brams1996fair}, \cite{procaccia2015cake} and \cite{robertson1998cake}.

\section{`Last Diminisher'-type of procedures for fair division with irrational entitlements}\label{sec: Last dim}
Our aim is to find a fair division $ S_1,\dots, S_n $ for players $ P_1,\dots, P_n $ with respective atomless probability measures $ 
\mu_1,\dots, \mu_n 
$ and (potentially irrational) entitlements $0<t_1\leq t_2\leq 
\dots \leq t_n<1 $ where $ \sum_{i=1}^{n}t_i=1 $.

As it is standard in the cake cutting literature, algorithms use certain queries. We allow the following operations.
\begin{itemize}
\item The four basic arithmetical operations and comparison on $ \mathbb{R} $.
\item The set operations on $ \mathcal{S} $.
\item Computing $ \mu_i(S) $ for some $ i\in[n] $ where slice $ S $ is obtained in a previous step.
\item  Cutting a slice $ S'\subseteq S $  with $ \mu_i(S)=\alpha $ for an $ i\in [n]$ and $ \alpha\in[0,\mu_i(S)] $ where 
either $ S=C $ or $ S $ is obtained in a previous step.\footnote{It is well-defined because $ \mu_i $ is atomless (see  
\cite{lorenc2013darboux}*{Theorem 5}).}
\end{itemize}

\subsection{Algorithm I}
Player $ P_1 $ picks 
some $ T_1$ with $ \mu_1(T_1) =t_1$. If $ T_{i} $ is already defined for some $ i<n $, we let $ 
T_{i+1}:=T_{i} $ if $ \mu_{i+1}(T_{i})\leq t_1 $ and we define $ T_{i+1} $ to be a subset of $ T_{i} $ with $ 
\mu_{i+1}(T_{i+1})=t_1 $ if $ \mu_{i+1}(T_{i})> t_1 $. After the recursion is done, $ \mu_i(T_{n}) \leq t_1 $ holds for each $ 
i $ 
and 
there is equality for at least 
one index.

If $ \mu_{1}(T_{n})=t_1 $, then we let $ S_1:=T_{n} $ and remove player $ P_1 $ 
 from the process. Since the rest of 
the cake worth at least $ 1-t_1
$ for all the  players, dividing it fairly with respect to the entitlements  $ \frac{t_i}{1-t_1} $ for $1<i\leq n $ leads to a 
fair division. Thus we invoke the algorithm for this sub-problem with less players. 

If $ \mu_{1}(T_{n})<t_1 $, then there must be a player who diminished the slice during the recursion. Let  $ k $ be the largest 
index for which $ P_k $ is such a player. 
We allocate $ T_{n} 
$ to $ P_k $ but we do  not remove $ P_k $ from the process unless  $ t_1=t_k $. In order to satisfy $ 
P_k $, he needs to get at least the $t'_k:= \frac{t_k- t_1}{\mu_k(C\setminus T_{n})} $ fraction of the rest of the cake $ 
C\setminus T_{n} $ according to his measure $ \mu_k $, while for $ i\neq k $ player $ P_i $ should get at least the fraction $ 
t'_i:= 
\frac{t_i}{\mu_i(C\setminus T_{n})}$ of $ C\setminus T_{n} $ w.r.t. $ \mu_i $.  As we already noticed $ \mu_i(T_{n})\leq t_1 
$ and hence  $ 
\mu_i(C\setminus 
T_{n}) \geq 1-t_1 $ for every $ i $, furthermore,  the inequality is strict for $ i=1 $ in this branch of the case distinction. 
Therefore  

\[\sum_{i=1}^{n}t'_i<
\frac{t_k- t_1}{1-t_1}+\sum_{i\neq k}\frac{t_i}{1-t_1}=\frac{(\sum_{i=1}^{n}t_i)-t_1}{1-t_1}=1. \]
Thus we can pick  rational numbers $ t''_i > t_i' $ with $ \sum_{i\leq n}t''_i=1 $. Finally, we use a subroutine  to divide $ 
C\setminus T_{n}  $ fairly among the players w.r.t. the rational entitlements $ t''_i $ to obtain a strongly fair division  for the 
original problem.

\subsection{Algorithm II}

In this algorithm no `rounding up to rationals' is necessary. We shall  make  several 
rounds and in each of them allocate a slice chosen in a `Last diminisher' manner. The satisfied players are dropping out of the 
process. The algorithm itself is quite simple in this case as well but the proof of the correctness is somewhat more involved.

For $i\in [n]  $, we denote by $ S^{m}_i 
$   the portion  allocated  to player $ P_i $ at the beginning of round $ m $.  We set $ S^{0}_i=\emptyset $ for 
every $ i $. The rest of the cake is $C_m:= C\setminus 
\bigcup_{i=1}^{n}S_i^{m}$.  We also 
have improved entitlements $ t_i^{m} $  where $ 
t_i^{0}:=t_i $. We say that player $ P_i $ is satisfied at the beginning of round $ m $ if $ t_i\leq \mu_i(S_i^{m}) $. Let us 
define $ I_m $ as the set of indices of the 
players that are unsatisfied at the beginning of round $ m $, i.e.
\[ I_m:=\{ i\in [n]:\ t_i>\mu_i(S_i^{m}) \}. \] 
If $ I_m=\emptyset $, then the process terminates and the sets $ S_i^{m} $ for $ i\in[n] $ form a fair division. If 
$ I_m\neq \emptyset $, then the algorithm does the following. Let
\[ c_m:= \min_{i\in I_m} \frac{t_i^{m}-\mu_i(S_i^{m})}{\mu_i(C_m)} \]
 and let $ i_m\in 
I_m $ the smallest index where this minimum is taken. Then 
player $ P_{i_m} $ 
picks a $ T_1^{m}\subseteq C_m $ with $ \mu_{i_m}(T_1^{m})= t_{i_m}^{m}-\mu_{i_m}(S_{i_m}^{m})$.  After this, 
players $ P_i $ for $ i\in I_m\setminus \{ i_m 
\} $ consider the (actual) 
slice one by one and diminish it or keep unchanged in the following way. If $ T_k^{m} $ is defined, 
$ k<\left|I_m\right| $ and $ \ell $ is the $ k $th smallest element of $ I_m\setminus \{ i_m \}$, then let

\[ 
T_{k+1}^{m}:= \begin{cases} T_k^{m} &\mbox{ if }  \frac{\mu_\ell(T_k^{m})}{\mu_\ell(C_m)} \leq 
c_m \\
S \text{ with }S\subseteq T_k^{m} \text{ and }\frac{\mu_\ell(S)}{\mu_\ell(C_m)}=c_m & \mbox{ if }  
\frac{\mu_\ell(T_k^{m})}{\mu_\ell(C_m)} >
c_m.
\end{cases} \]
Eventually they obtain 
$T^{m}_{\left|I_m\right|}=:R_m $ for which
\begin{equation}\label{eq1}
 \frac{\mu_i(R_m)}{\mu_i(C_m)}\leq \frac{t_{i_m}^{m}-\mu_{i_m}(S_{i_m}^{m})}{\mu_{i_m}(C_m)}=c_m
\end{equation} for every $ i\in 
I_m $ and there is equality for at least one index. Let $ 
j_m:=i_m $ if there is equality at (\ref{eq1}) for $ i_m $ and  let $ j_m $ be the smallest index in $ I_m $ for which we have 
equality if the 
inequality is strict for $ i_m $. We allocate $R_m $ to player $ 
P_{j_m} $, formally $ S_{j_m}^{m+1}:=S_{j_m}^{m}\cup R_m  $ and  $ S_i^{m+1}:=S_i^{m} $ for $ i\in [n]\setminus \{ 
j_m \} $. For $ i\in I_{m+1} $ let
\[ t_{i}^{m+1}:=\mu_i(S_i^{m+1})+ \frac{t_{i}^{m}-\mu_i(S_i^{m+1})}{\sum_{j\in I_{m+1}  
}\frac{t_{j}^{m}-\mu_j(S_j^{m+1})}{\mu_j(C_{m+1})}},\] 
which completes the description of the general step of the algorithm.  

Let us  
shade some more light  on the running of the algorithm and on the ideas behind the formal definitions by a concrete example:
\begin{exm}
Let the cake be the unit interval $ [0,1] $ and the slices are defined to be the Borel subsets. We have $ 3 $ players with respective entitlements $ 
t_1=\frac{1}{2}, 
t_2=\frac{1}{3}\text{ and } t_3=\frac{1}{6} $. The measure $ \mu_1 $ is the uniform distribution on $ [0,\frac{1}{2}] $, $ \mu_2 $ is the same 
but on $ [\frac{1}{2}, 1] $ and $ \mu_3 $ is the uniform distribution on the whole cake $ [0,1] $.

Then the constant $ c_0 $ is simply the smallest entitlement $ \frac{1}{6} $ and $ i_0=3 $. Player $ P_3 $ cuts off a slice which could be $ 
T^{0}_1=[0, \frac{1}{6}]$. Player $ P_1 $ diminishes this slice,  he cuts off for example $ T^{0}_2=[0,\frac{1}{12}] $ (this worths $ 
\frac{1}{6} $ for him ). Since $ \mu_2(T^{0}_2)=0 $, player $ P_2 $ does not change this slice, i.e. $ T^{0}_3=T^{0}_2 $. We have $ j_0=1 $ 
and 
allocate $ R_0:=T^{0}_3 
$ to $ P_1 $, more precisely $ S^{1}_1:=R_0 $ and $ S^{1}_2:=S^{1}_3:=\emptyset $.

Now $ P_1 $ still needs $ \frac{1}{2}-\frac{1}{6}=\frac{1}{3} $. This is the $ \frac{\frac{1}{3}}{\frac{5}{6}}=\frac{2}{5} $ fraction of the 
remaining cake according to his own measure. The removed part has no value for $ P_2 $, thus he still wants the $ \frac{1}{3} $ fraction of the 
remaining 
part.  Finally, $ P_3 $ wants the $ \frac{\frac{1}{6}}{\frac{11}{12}}=\frac{2}{11} $ fraction of the rest according to his  measure. Now the 
algorithm ``scales'' the ratios $\frac{2}{5},  \frac{1}{3} $ and $ 
\frac{2}{11} $ in order to sum up to $ 1 $, norms the 
measures 
to be probability measures on the remaining 
cake and does the same that it initially did.
\end{exm}

We proceed by proving that the steps of the algorithm are well-defined and it always terminates:
\begin{lem}\label{lem: steps done eq strict}
The steps of  Algorithm II  can be done and it  maintains the equation
\begin{equation}\label{eq2}
\sum_{i\in I_m} \frac{t_i^{m}-\mu_i(S_i^{m})}{\mu_i(C_m)}=1 
\end{equation}
as well as the inequalities $ t_i^{m}\geq t_i $  for every $ i\in [n] $.
\end{lem}  
\begin{proof}
We use induction on $ m $. For $ m=0 $, (\ref{eq2})  says $ \sum_{i=1}^{n}t_i=1 $ which we assumed and $ t_i^{0}=t_i $ 
by definition. Suppose we are at 
the beginning of round $ m $ and  (\ref{eq2}) holds so far and $ t_i^{m}\geq t_i $ for  $ i\in [n] $. By the definition of $ I_m $ 
and by $ t_i^{m}\geq t_i $, the summands at 
 (\ref{eq2}) are all positive, thus we have $ 
\frac{t_{i_m}^{m}-\mu_{i_m}(S_{i_m}^{m})}{\mu_{i_m}(C_m)}\leq1 $. Therefore $  
t_{i_m}^{m}-\mu_{i_m}(S_{i_m}^{m})\leq\mu_{i_m}(C_{m})$ and hence
there is indeed a $ T_1^{m}\subseteq C_m $ with $ \mu_{i_m}(T_1^{m})= t_{i_m}^{m}-\mu_{i_m}(S_{i_m}^{m})$. If we know that the 
entitlements $ t_i^{m+1} $ for $ i\in I_{m+1} $ are well-defined (i.e. no zeros in the denominators), then a direct calculation shows that they sum 
up 
to $ 1 $.
\begin{obs}\label{obs: satisfy player}
If $ j_m=i_m 
$, then player $ P_{i_m} $ will be satisfied  after round $ m $ because in this case we 
have equality at (\ref{eq1}) for $ i_m $ and $ t_{i_m}^{m} \geq t_{i_m} $.
\end{obs}
\noindent If $ I_m=\{ i_m \} $, then we must have $ j_m=i_m $. It follows by Observation \ref{obs: satisfy player} that $ 
I_{m+1}=\emptyset $ and therefore the algorithm terminates, thus in this case there is nothing more to prove.

Suppose 
that $ \left|I_m\right|>1 $. Then the right side of (\ref{eq1}) is strictly smaller than $ 1 $ because it is one of the summands at 
(\ref{eq2}) which are: all positive, there are at least two of them and they sum up to $ 1 $.
By subtracting both sides of (\ref{eq1}) from $ 1 $ and taking the reciprocates we obtain  \begin{equation}\label{eq3}
 \frac{\mu_i(C_{m})}{\mu_i(C_{m+1})}\leq 
\frac{1}{1- \frac{t_{i_m}^{m}-\mu_{i_m}(S_{i_m}^{m})}{\mu_{i_m}(C_m)}}
\end{equation}
for $ i\in I_m $. In particular $ \mu_j(C_{m+1})>0 $ for $ j\in I_{m+1} $. Since $ t_{j}^{m} \geq t_j>\mu_j(S_j^{m+1}) $, the entitlements $ 
t_i^{m+1} $ are 
indeed well-defined.
\begin{claim}\label{claim: strict}
We have
\[ \sum_{i\in I_{m+1}  
}\frac{t_{i}^{m}-\mu_i(S_i^{m+1})}{\mu_i(C_{m+1})}\leq 1 \]   as well as  $ t_i^{m+1}\geq t_i^{m} 
$ for every $ i\in I_{m+1} $, moreover,  all of 
these  inequalities are strict if $ j_m\neq i_m $. 
\end{claim}
\begin{proof}
\begin{align*}
&\sum_{i\in I_{m+1}  
}\frac{t_{i}^{m}-\mu_i(S_i^{m+1})}{\mu_i(C_{m+1})}\stackrel{*}{\leq}\ \sum_{i\in I_{m}  
}\frac{t_{i}^{m}-\mu_i(S_i^{m+1})}{\mu_i(C_{m+1})}=\sum_{i\in I_{m}  
}\frac{t_{i}^{m}-\mu_i(S_i^{m+1})}{\mu_i(C_{m})} \cdot \frac{\mu_i(C_{m})}{\mu_i(C_{m+1})}
\stackrel{(\ref{eq3})}{\leq}\\ &\sum_{i\in I_{m}  
}\frac{t_{i}^{m}-\mu_i(S_i^{m+1})}{\mu_i(C_{m})} \cdot \frac{1}{1- 
\frac{t_{i_m}^{m}-\mu_{i_m}(S_{i_m}^{m})}{\mu_{i_m}(C_m)}}\stackrel{**}{\leq}\left[ \left( \sum_{i\in I_{m}  
}\frac{t_{i}^{m}-\mu_i(S_i^{m})}{\mu_i(C_{m})}\right)-\frac{\mu_{j_m}(R_m)}{\mu_{j_m}(C_m)}\right]   \cdot 
\frac{1}{1- \frac{t_{i_m}^{m}-\mu_{i_m}(S_{i_m}^{m})}{\mu_{i_m}(C_m)}}\\
&\stackrel{***}{=}
\left[ 1- \frac{t_{i_m}^{m}-\mu_{i_m}(S_{i_m}^{m})}{\mu_{i_m}(C_m)} \right] \frac{1}{1- 
\frac{t_{i_m}^{m}-\mu_{i_m}(S_{i_m}^{m})}{\mu_{i_m}(C_m)}}=1.
\end{align*} 
$ *\  I_{m+1}\subseteq I_m$ and the summands are non-negative,\\ 
$ **\  \mu_{j_m}(S_{j_m}^{m+1})=\mu_{j_m}(S_{j_m}^{m})+ 
\mu_{j_m}(R_m) $ because $ S_{j_m}^{m+1} $ is the disjoint union 
of $ S_{j_m}^{m} $ and $ R_m $, furthermore, $ S_i^{m+1}=S_i^{m} $ for $ i\in I_m\setminus \{ j_m \} $,\\ 
$ *** $ (\ref{eq2}) and there is equality at (\ref{eq1}) for $ j_m $.\\ 

The overestimation of $ 
\frac{\mu_{i_m}(C_{m})}{\mu_{i_m}(C_{m+1})} $ via (\ref{eq3})  is strict if 
$ i_m\neq j_m $.  The part about the  inequalities  $ t_i^{m+1}\geq t_i^{m} 
$ follows directly from the already proved part and the definition of $ 
t_i^{m+1} $.
\end{proof}
\end{proof}
\begin{lem}
Algorithm II terminates after finitely many steps.
\end{lem}
\begin{proof}
Suppose for a contradiction that  the algorithm does not terminate for $ \mu_1,\dots, \mu_n $ and $ t_1,\dots, t_n $. Let $ k $ 
be 
the smallest number for which $ I_k=I_m $ for every $ m>k $. Then $ j_k\neq i_k $ since otherwise we had $ 
I_{k+1}=I_k\setminus \{ i_k \}\subsetneq I_k$ (see Observation \ref{obs: satisfy player}). By Claim \ref{claim: strict} this 
implies $ t_i^{k+1}>t_i^{k}\geq t_i $ for every $ 
i\in I_k $. Let $ (m_\ell)_{\ell\in \mathbb{N}} $ be a strictly increasing sequence of natural numbers with $ m_0>k $ such that there are $ 
i^{*}, 
j^{*}\in I_k $ with $ i_{m_\ell}=i^{*} $ and $ j_{m_\ell}=j^{*} $ for every $ \ell $. There cannot be a $ \varepsilon>0 $ 
such 
that 
$ \mu_{j^{*}}(R_{m_\ell})\geq \varepsilon $ for infinitely many $ \ell $ because then $ P_{j^{*}} $ would be eventually 
satisfied and removed from the  process, 
contradicting the definition of $ k $. Thus $ \lim_{\ell \rightarrow \infty} \mu_{j^{*}}(R_{m_\ell})=0 $. Since there is 
equality 
for $ 
j^{*} $ at (\ref{eq1}) for each $ m_{\ell} $, we know that 
\[ \mu_{j^{*}}(R_{m_\ell})= \left[ t_{i^{*}}^{m_\ell}-\mu_{i^{*}}(S_{i^{*}}^{m_\ell})\right]   
\frac{\mu_{j^{*}}(C_{m_\ell})}{\mu_{i^{*}}(C_{m_\ell})}. \]
If $\liminf_{\ell \rightarrow \infty} t_{i^{*}}^{m_\ell}-\mu_{i^{*}}(S_{i^{*}}^{m_\ell})=0 $, then $ 
\mu_{i^{*}}(S_{i^{*}}^{m_\ell}) \geq t_{i^*} $ for some $ \ell $ because  $t_{i^{*}}^{m_0}>t_{i^*}  $ and  $ 
t_{i^{*}}^{m_\ell} $ is increasing in $ \ell $, a contradiction. Therefore we must have $ \lim_{\ell \rightarrow 
\infty}\frac{\mu_{j^{*}}(C_{m_\ell})}{\mu_{i^{*}}(C_{m_\ell})}=0 $. Since $ \mu_{i^{*}}(C_{m_\ell})\leq  
\mu_{i^{*}}(C_{m_0})$, this implies $ \lim_{\ell \rightarrow \infty}\mu_{j^{*}}(C_{m_\ell})=0 $. But then it follows from 
(\ref{eq2}) that $\lim_{\ell 
\rightarrow \infty} t_{j^{*}}^{m_\ell}-\mu_{j^{*}}(S_{j^{*}}^{m_\ell})=0 $. As earlier with $ i^{*} $, this implies that 
player 
$ P_{j^{*}} $ will be eventually satisfied, which is a contradiction.
\end{proof}
 \section{From fairness to strong fairness, an elementary approach}\label{sec: strong fair}
\begin{lem}\label{l: t't''}
Assume that  $ (C, \mathcal{S}) $ is a measurable space, $ I $ is a countable index set, and for $ i\in I $,  $ 
\mu_i $ is an atomless probability
measure  defined on $ \mathcal{S} $ and $ t_i $ is a positive number such not all the $ \mu_i $ are identical and  $ 
\sum_{i\in I}t_i=1 $. Then there 
is a partition $ C=C'\sqcup C'' $ and $ 
t_i', 
t_i'' > 0$ with $ \sum_{i\in I}t'_i=\sum_{i\in I}t''_i =1$   such that  $ 
t'_i \cdot 
\mu_i(C')+t''_i \cdot 
\mu_i(C'')>t_i $  for each $ i\in I $. 
\end{lem}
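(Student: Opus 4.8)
The plan is to take $C'$ to be a single set witnessing that the measures are not all identical, and to produce the strict inequalities by transferring a small amount of entitlement between two groups of players, where the transfer is carried out \emph{asymmetrically} on $C'$ and on $C''$.

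First I would use the hypothesis that not all $\mu_i$ coincide to fix a set $A\in\mathcal{S}$ and indices $a,b\in I$ with $\mu_a(A)<\mu_b(A)$ (swapping $a$ and $b$ if necessary). Since $I$ is countable, the set $\{\mu_i(A):i\in I\}$ is countable and hence cannot cover the uncountable interval $(\mu_a(A),\mu_b(A))$; pick a threshold $\theta$ in that interval with $\mu_i(A)\neq\theta$ for every $i$. Note $0<\theta<1$. Put $C':=A$, $C'':=C\setminus A$ and $x_i:=\mu_i(A)$, and split $I$ into the two nonempty sets $I^+:=\{i:x_i>\theta\}$ (which contains $b$) and $I^-:=\{i:x_i<\theta\}$ (which contains $a$); set $P:=\sum_{i\in I^+}t_i>0$ and $N:=\sum_{i\in I^-}t_i>0$, so that $P+N=1$.

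Next I would fix a small $s>0$ with $(1-\theta)s<N$ and $\theta s<P$, and write $\tau_1:=(1-\theta)s$, $\tau_2:=\theta s$. The new entitlements are obtained by moving, proportionally to the $t_i$, a total mass $\tau_1$ from $I^-$ to $I^+$ to form $t'$, and a total mass $\tau_2$ from $I^+$ to $I^-$ to form $t''$: explicitly $t_i':=t_i(1+\tau_1/P)$, $t_i'':=t_i(1-\tau_2/P)$ for $i\in I^+$, and $t_i':=t_i(1-\tau_1/N)$, $t_i'':=t_i(1+\tau_2/N)$ for $i\in I^-$. The choice of $s$ guarantees $t_i',t_i''>0$, and a one-line computation gives $\sum_{i\in I}t_i'=\sum_{i\in I}t_i''=1$.

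It then remains to verify $t_i'\mu_i(C')+t_i''\mu_i(C'')>t_i$ for each $i$. Substituting $\mu_i(C')=x_i$ and $\mu_i(C'')=1-x_i$ into the formulas above, the cross terms collapse thanks to the identity $(1-\theta)x-\theta(1-x)=x-\theta$: for $i\in I^+$ the left-hand side equals $t_i+\frac{t_i s}{P}(x_i-\theta)$, and for $i\in I^-$ it equals $t_i+\frac{t_i s}{N}(\theta-x_i)$, both strictly greater than $t_i$ by the definitions of $I^+$ and $I^-$. The only genuinely non-mechanical point is the choice of the ratio $\tau_1:\tau_2=(1-\theta):\theta$, which is precisely what makes the effective threshold come out to be $\theta$ instead of $1/2$ and which is the reason a threshold $\theta$ avoiding all the values $\mu_i(A)$ was needed; beyond getting this ratio right and keeping track of which group gains on which piece, I do not anticipate any real obstacle.
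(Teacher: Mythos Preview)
Your argument is correct, and the verification is routine once the ratio $\tau_1:\tau_2=(1-\theta):\theta$ is in place: for $i\in I^+$ one gets $t_i' x_i+t_i''(1-x_i)=t_i\bigl(1+\tfrac{s}{P}(x_i-\theta)\bigr)>t_i$, and symmetrically for $I^-$, while the sums $\sum t_i'=\sum t_i''=1$ and the positivity constraints follow immediately from the choice of $s$.

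The route, however, differs from the paper's. The paper perturbs only \emph{two} players: it keeps $s_i'=s_i''=t_i$ for all $i\notin\{j,k\}$, adjusts $s_j',s_j'',s_k',s_k''$ by small amounts $\varepsilon,\delta$ so that $s_i'\mu_i(C')+s_i''\mu_i(C'')=t_i$ holds exactly for every $i$ while both sums $\sum s_i'$ and $\sum s_i''$ drop strictly below~$1$, and then \emph{normalises} to obtain the final $t_i',t_i''$. The strict inequality for every player thus comes entirely from the normalisation step. Your approach instead partitions all of $I$ by a threshold $\theta$ chosen (using countability of $I$) to avoid every value $\mu_i(A)$, and shifts entitlement proportionally within each group so that the final $t_i',t_i''$ already sum to~$1$ without rescaling. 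What you gain is a cleaner, more symmetric computation with no system of inequalities to solve for $\varepsilon,\delta$; what the paper's version gains is that it never uses the countability hypothesis on $I$ and touches only two coordinates.
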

\begin{proof}
Suppose that $ j,k\in I $ and $ C'\in \mathcal{S} $ such that $ \mu_j(C')<\mu_k(C') $.  It is enough to find $ s'_i, 
s''_i>0 $ with $ \sum_{i\in I}s'_i, \sum_{i\in I}s''_i<1 $ and $ 
s'_i \cdot 
\mu_i(C')+s''_i \cdot 
\mu_i(C'')=t_i $ for every $ i\in I $ because then \[ t'_i:=\frac{s'_i}{ \sum_{\ell\in I}s'_\ell}\text{ and 
}t''_i:=\frac{s''_i}{ \sum_{\ell\in I}s''_\ell} \] are as desired. We are 
looking for $ \varepsilon, \delta>0 $ for which the definitions
\begin{itemize}
\item $ s'_j:=t_j-\varepsilon$
\item $ s''_j:= t_j+ \varepsilon\cdot \frac{\mu_j(C')}{\mu_j(C'')} $
\item $ s'_k:= t_k+\delta \cdot \frac{\mu_k(C'')}{\mu_k(C')}$
\item $ s''_k:=t_k-\delta $
\item $ s_i'':= s_i':=t_i$ for $ i\in \mathbb{N}\setminus \{ j,k \} $
\end{itemize}
are suitable. Note that whatever  $ \varepsilon $ and $ \delta $ we choose,  $s'_i \cdot 
\mu_i(C')+s''_i \cdot \mu_i(C'')=t_i $ will hold  for each $ i\in \mathbb{N} $. Thus the requirements $ s_i', s_i''>0 $ and  $ 
\sum_{i\in I}s'_i, \sum_{i\in I}s''_i<1 $ mean for $ \varepsilon $ and $ 
\delta $ that they satisfy
\begin{align*}
\varepsilon &\in (0,t_j)\\
\delta &\in (0, t_k)\\
\varepsilon&>\delta\cdot \frac{\mu_k(C'')}{\mu_k(C')}\\
\delta&>\varepsilon\cdot \frac{\mu_j(C')}{\mu_j(C'')}
\end{align*} 
If $ \mu_j(C')=0 $, then the last inequality is redundant and the existence of a solution is straightforward. Otherwise the last two 
inequalities demand \[  \frac{\mu_k(C'')}{\mu_k(C')}<\frac{\varepsilon}{\delta} < \frac{\mu_j(C'')} 
{\mu_j(C')}.\] Since $ \frac{\mu_k(C'')}{\mu_k(C')}<\frac{\mu_j(C'')}{\mu_j(C')}  $ follows from $ \mu_j(C')<\mu_k(C') $, the 
desired $ \varepsilon $ and $ \delta $ exist in this case as well.
\end{proof}

Let $ \mu_i' $ be  the restriction of $ \frac{\mu_i}{\mu_i(C')} $ to $ \mathcal{S}\cap \mathcal{P}(C') $ if $ \mu_i(C')\neq 0 $ 
and an arbitrary  atomless probability measure on $ \mathcal{S}\cap \mathcal{P}(C') $ if $ \mu_i(C')= 0 $. We define $ \mu_i'' $ 
analogously with respect to $ C'' $.
\begin{cor}\label{cor: strong elim}
Assume the settings of Lemma \ref{l: t't''}. If $ \{ S'_i:\ i\in I \} $ is a fair division with respect to $ \mu_i', t_i'\ (i\in 
I) $  and 
$ \{ S''_i:\ i\in 
I \} $ is a fair divisions with respect to $ \mu_i'', t_i''\ (i\in I)  $,  then for $ S_i:=S_i'\cup S''_i $,  $ \{ S_i:\ 
i\in I \} $ 
is a strongly fair division
with respect to  $ \mu_i, t_i\ (i\in I)  $.
\end{cor}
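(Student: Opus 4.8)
The plan is to unwind the definitions and feed the resulting lower bound into the key inequality of Lemma \ref{l: t't''}. First I would check the structural claims: since $S_i'\subseteq C'$, $S_i''\subseteq C''$ and $C'\cap C''=\emptyset$, the union $S_i:=S_i'\cup S_i''$ is a \emph{disjoint} union, $S_i\in\mathcal{S}$ because $S_i',S_i''\in\mathcal{S}$, and $\{S_i:i\in I\}$ is a partition of $C=C'\sqcup C''$ because $\{S_i':i\in I\}$ partitions $C'$ and $\{S_i'':i\in I\}$ partitions $C''$. So it only remains to verify the strict inequalities $\mu_i(S_i)>t_i$.

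Next I would estimate $\mu_i(S_i)=\mu_i(S_i')+\mu_i(S_i'')$ from below. The point is the translation between $\mu_i$ and its normalized restriction $\mu_i'$: if $\mu_i(C')\neq 0$, then by the definition of $\mu_i'$ and the fairness of $\{S_i'\}$ for $(\mu_i',t_i')$ we get $\mu_i(S_i')=\mu_i(C')\cdot\mu_i'(S_i')\geq \mu_i(C')\cdot t_i'$; and if $\mu_i(C')=0$, then $\mu_i(S_i')\geq 0=\mu_i(C')\cdot t_i'$ holds trivially — this is exactly the case where the ``arbitrary atomless measure'' fallback for $\mu_i'$ does no harm, since it is multiplied by the zero factor $\mu_i(C')$. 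The identical dichotomy applies to $C''$ and $\mu_i''$. Hence, in every case,
\begin{equation*}
\mu_i(S_i)=\mu_i(S_i')+\mu_i(S_i'')\ \geq\ \mu_i(C')\,t_i'+\mu_i(C'')\,t_i''.
\end{equation*}

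Finally, Lemma \ref{l: t't''} provides precisely $\mu_i(C')\,t_i'+\mu_i(C'')\,t_i''>t_i$ for each $i\in I$, so $\mu_i(S_i)>t_i$ and $\{S_i:i\in I\}$ is a strongly fair division for $(\mu_i,t_i)_{i\in I}$. I do not anticipate any genuine obstacle here; the corollary is essentially a direct consequence of the lemma, and the only item requiring a moment of care is the bookkeeping in the degenerate cases $\mu_i(C')=0$ or $\mu_i(C'')=0$, which is dispatched by the observation above.
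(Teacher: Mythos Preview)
Your argument is correct and follows the same route as the paper: bound $\mu_i(S_i')\ge t_i'\mu_i(C')$ and $\mu_i(S_i'')\ge t_i''\mu_i(C'')$ via the normalized measures, add, and invoke the strict inequality of Lemma~\ref{l: t't''}. Your treatment is in fact slightly more thorough, since you spell out why the degenerate cases $\mu_i(C')=0$ or $\mu_i(C'')=0$ cause no trouble, a point the paper leaves implicit.
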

\begin{proof}
We have $ \mu_i(S'_i)\geq t_i' \cdot \mu_i(C') $ and $ \mu_i(S''_i)\geq t_i'' \cdot \mu_i(C'') $ by fairness, thus by Lemma 
\ref{l: t't''} \[ \mu_i(S_i)=\mu_i(S_i'\sqcup S_i'')=\mu_i(S_i')+\mu_i(S_i'')\geq t_i' \cdot \mu_i(C') +t_i'' \cdot \mu_i(C'') >t_i. \]
\end{proof}

\section{Existence of a fair division for infinitely many players}\label{sec: inf}
We repeat the theorem here for convenience.
\cake*

\begin{proof}
Without loss of generality we may look for a sub-partition instead of a 
partition, i.e. we can  relax `$ C=\bigsqcup_{i=0}^{\infty}S_i $' 
to `$ C\supseteq\bigsqcup_{i=0}^{\infty}S_i $' since the remaining surplus part of the cake  can be given  to anybody.  The last 
sentence of 
Theorem \ref{t: main cake} follows from the rest of it via Corollary \ref{cor: strong elim}.

For $ n\in \mathbb{N} $, we let $ t_0^{n}, 
t_1^{n},\dots, t_{n}^{n} $ to be the first $ n+1 $ entitlements scaled to sum up to $ 1 $, i.e.   \[ t_{i}^{n}:= 
\frac{t_i}{\sum_{j=0}^{n}t_j}.\] 
\begin{obs}\label{obs}
 $ (1-t_{n+1}^{n+1})  
t_i^{n}=t^{n+1}_i $ and $ \lim_{n\rightarrow \infty}t_{i}^{n}=t_i $.
\end{obs}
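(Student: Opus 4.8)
The plan is to reduce both claims to the behaviour of the partial sums $ \sigma_n:=\sum_{j=0}^{n}t_j $, which are strictly positive because $ t_0>0 $, so that every quotient $ t_i^{n}=t_i/\sigma_n $ is well defined. Once this notation is in place, each identity is a one-line algebraic manipulation, so I would simply record them in the order stated.

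First I would establish the recursion $ (1-t_{n+1}^{n+1})\,t_i^{n}=t_i^{n+1} $ by direct computation. By definition $ t_{n+1}^{n+1}=t_{n+1}/\sigma_{n+1} $, and since $ \sigma_{n+1}=\sigma_n+t_{n+1} $ we obtain
\[ 1-t_{n+1}^{n+1}=1-\frac{t_{n+1}}{\sigma_{n+1}}=\frac{\sigma_{n+1}-t_{n+1}}{\sigma_{n+1}}=\frac{\sigma_n}{\sigma_{n+1}}. \]
Multiplying this by $ t_i^{n}=t_i/\sigma_n $, the factor $ \sigma_n $ cancels and what remains is $ t_i/\sigma_{n+1}=t_i^{n+1} $, which is exactly the asserted identity.

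For the limit I would invoke the hypothesis $ \sum_{j=0}^{\infty}t_j=1 $, which says precisely that $ \sigma_n\to 1 $ as $ n\to\infty $. Since $ t_i $ is a fixed constant and the denominators converge to the nonzero limit $ 1 $, the quotient rule for limits yields $ \lim_{n\to\infty}t_i^{n}=\lim_{n\to\infty}t_i/\sigma_n=t_i/1=t_i $.

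I do not expect any genuine obstacle here: both statements are elementary consequences of the definition of $ t_i^{n} $ together with the normalisation $ \sum_j t_j=1 $, and the argument is purely arithmetic, using no measure-theoretic input. The only minor point worth flagging is the well-definedness of the quotients, guaranteed by $ \sigma_n\geq t_0>0 $. The recursion is nevertheless the key algebraic fact, since it is what drives the telescoping rescaling of entitlements in the proof of Theorem \ref{t: main cake}.
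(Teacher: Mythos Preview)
Your proof is correct and follows essentially the same approach as the paper: both arguments reduce to the identity $1-t_{n+1}^{n+1}=\sigma_n/\sigma_{n+1}$ and then use $\sigma_n\to 1$ for the limit. The only cosmetic difference is that the paper computes the ratio $t_i^{n+1}/t_i^{n}$ directly, while you compute $1-t_{n+1}^{n+1}$ first and then multiply; the content is identical.
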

\begin{proof}
\begin{align*}
\frac{t^{n+1}_i}{t_i^{n}}&=\frac{\sum_{j=0}^{n}t_j}{\sum_{j=0}^{n+1}t_j}=
\frac{\sum_{j=0}^{n+1}t_j-t_{n+1}}{\sum_{j=0}^{n+1}t_j}=1-t_{n+1}^{n+1},\\
\lim_{n\rightarrow \infty}t_{i}^{n}&=\lim_{n\rightarrow 
\infty}\frac{t_i}{\sum_{j=0}^{n}t_j}=\frac{t_i}{\lim_{n\rightarrow 
\infty}\sum_{j=0}^{n}t_j}=t_i.
\end{align*}
\end{proof}

We shall define recursively $ S_i^{n}\in \mathcal{S} $ for $ i,n\in \mathbb{N} $ with $ i\leq n $ in such a way that 
\begin{enumerate}[label=(\roman*)]
\item\label{item 1} $ C=\bigsqcup_{i\leq n} S_i^{n} $ for every $ n $;
\item\label{item 2} $ \mu_i(S_i^{n})\geq t_i^{n} $;
\item\label{item 3} For every fixed $ i\in \mathbb{N} $ the sequence 
$ (S_{i}^{n})_{n\geq i} $ is $ \subseteq $-decreasing.
\end{enumerate}
Observe that conditions \ref{item 1} and \ref{item 2} say  that for each fixed $ n $ the sets $S_0^{n}, S_1^{n},\dots, S_n^{n} 
$ form a fair division with respect to 
the 
measures $ \mu_i $ and entitlements $ t_i^{n} $. Although such a fair division can be found for every particular $ n $, it cannot 
be  guaranteed without condition \ref{item 3}  that they have a meaningful ``limit'' 
which provides a fair division  in the original settings.

We let $ S_0^{0}:=C $  which obviously satisfies the conditions. Suppose that 
$ S_{0}^{n}, S_{1}^{n}\dots, S_{n}^{n} $ are already defined for some $ n\in \mathbb{N} $. We need to find  for each $ i \leq 
n $ 
an $ S_{i}^{n+1}\subseteq S_i^{n} $ with $ \mu_i(S_i^{n+1})\geq t_i^{n+1} $ in such a way that for \[ 
S_{n+1}^{n+1}:=C\setminus 
\bigcup_{i\leq n}S_{i}^{n+1}  \]
we have  $ \mu_{n+1}(S_{n+1}^{n+1})\geq t_{n+1}^{n+1} $. For the last inequality it is enough to ensure that

\begin{equation}\label{ineq2}
\mu_{n+1}(S_i^{n}\setminus S_{i}^{n+1})\geq \mu_{n+1}(S_{i}^{n})\cdot t_{n+1}^{n+1} \text{ for  } i\leq n.
\end{equation}

 Indeed, since \[S_{n+1}^{n+1}=\bigsqcup_{i\leq n}  S_i^{n}\setminus S_{i}^{n+1}, \] 
the inequalities (\ref{ineq2}) imply \begin{align*}
\mu_{n+1}(S_{n+1}^{n+1})&=\mu_{n+1}\left( \bigsqcup_{i\leq n}  S_i^{n}\setminus 
 S_{i}^{n+1}\right)=\sum_{i=0}^{n} \mu_{n+1}(S_i^{n}\setminus S_{i}^{n+1})
 \geq \sum_{i=0}^{n}  
 \mu_{n+1}(S_{i}^{n})\cdot t_{n+1}^{n+1}\\ &=t_{n+1}^{n+1}\cdot \sum_{i=0}^{n}  
  \mu_{n+1}(S_{i}^{n})=  t_{n+1}^{n+1}\cdot  \mu_{n+1}(C)=t_{n+1}^{n+1}\cdot  1=t_{n+1}^{n+1},
\end{align*}  
 where we used \ref{item 1}  combined with the fact that $ \mu_{n+1} $ is a probability 
 measure. Therefore it is enough  to find for every $ i\leq n $ an $ S_i^{n+1}\subseteq S_i^{n}$  such that    
\begin{align}
 \mu_i(S_i^{n+1})&\geq t_i^{n+1} \label{ineq1}\\
 \mu_{n+1}(S_i^{n}\setminus S_{i}^{n+1})&\geq \mu_{n+1}(S_{i}^{n})\cdot 
  t_{n+1}^{n+1}. \label{ineq3}
\end{align}  
Let $ i\leq n $ be fixed. If $ \mu_{n+1}(S_i^{n})=0 $, then we let  $ S_i^{n+1}:=S_i^{n} $ which is clearly appropriate since $  
t_i^{n}\geq t_i^{n+1}$ (see Observation \ref{obs}). 
Suppose that $ \mu_{n+1}(S_i^{n})>0 $ and note that $ \mu_i(S_i^{n})  \geq t_i^{n}>0$ by assumption.     
We claim that choosing $ S_{i}^{n+1} $ to be  the slice corresponding to $ i $ in a fair division  of $ S_i^{n} $ between $ P_i $ 
and $ 
P_{n+1} $ with respect to the restrictions of $ 
\frac{\mu_i}{\mu_i(S_i^{n})} $ and  $ 
\frac{\mu_{n+1}}{\mu_{n+1}(S_i^{n})} $ to $\mathcal{S}\cap\mathcal{P}(S_i^{n}) $ and  respective entitlements $ 
1-t_{n+1}^{n+1} $ and $ 
t_{n+1}^{n+1} $ is suitable. 
Indeed, by the fairness of the obtained bipartition $ \{S_{i}^{n+1},\ S_{i}^{n}\setminus S_{i}^{n+1}  \} $ of $ 
S_{i}^{n} $  we have
\begin{align*}
\frac{\mu_i(S_{i}^{n+1})}{\mu_i(S_i^{n})} &\geq 1-t_{n+1}^{n+1},\\
 \frac{\mu_{n+1}(S_i^{n}\setminus S_{i}^{n+1})}{\mu_{n+1}(S_i^{n})}& \geq t_{n+1}^{n+1}.
\end{align*}
Here the second inequality  is equivalent with (\ref{ineq3}) and the first one implies  
(\ref{ineq1}) since
\[ \mu_i(S_{i}^{n+1})\geq (1-t_{n+1}^{n+1})  \mu_i(S_i^{n})\geq (1-t_{n+1}^{n+1})  t_i^{n}=t^{n+1}_i, \]
where we used $ \mu_i(S_i^{n})\geq t_i^{n}  $ and Observation \ref{obs}. The recursion is done. 

We define $ S_i:=\bigcap_{n \geq i}S_i^{n}  $ for $ i\in \mathbb{N} $. Then 
for 
$ i<j $ we have $ S_i\cap S_j=\emptyset $ because $ S_i \subseteq S_{i}^{j},\  S_j \subseteq S_{j}^{j} $ and $ S_{i}^{j}\cap 
S_{j}^{j}=\emptyset $ by \ref{item 1}. Furthermore,
\[ \mu_i(S_i)=\mu_i\left( \bigcap_{n \geq i}S_i^{n}\right)=\lim_{n\rightarrow \infty}\mu_i(S_i^{n})\geq\lim_{n\rightarrow 
\infty}t_{i}^{n}=t_i   \]
by \ref{item 3},  \ref{item 2}  and Observation \ref{obs}. This completes the proof of Theorem \ref{t: main cake}. 
\end{proof}
 
\begin{bibdiv}
\begin{biblist}

\bib{barbanel2005geometry}{book}{
      author={Barbanel, Julius~B.},
       title={The geometry of efficient fair division},
   publisher={Cambridge University Press},
        date={2005},
}

\bib{barbanel1995game}{inproceedings}{
      author={Barbanel, Julius~B.},
      author={others},
       title={Game-theoretic algorithms for fair and strongly fair cake
  division with entitlements},
organization={Citeseer},
        date={1995},
   booktitle={Colloquium math},
      volume={69},
       pages={59\ndash 53},
}

\bib{brams1996fair}{book}{
      author={Brams, Steven~John},
      author={Taylor, Alan~D.},
       title={Fair division: From cake-cutting to dispute resolution},
   publisher={Cambridge University Press},
        date={1996},
}

\bib{cheze2017existence}{article}{
      author={Ch{\`e}ze, Guillaume},
       title={Existence of a simple and equitable fair division: A short
  proof},
        date={2017},
     journal={Mathematical Social Sciences},
      volume={87},
       pages={92\ndash 93},
}

\bib{cseh2020complexity}{article}{
      author={Cseh, {\'A}gnes},
      author={Fleiner, Tam{\'a}s},
       title={The complexity of cake cutting with unequal shares},
        date={2020},
     journal={ACM Transactions on Algorithms (TALG)},
      volume={16},
      number={3},
       pages={1\ndash 21},
}

\bib{dubins1961cut}{article}{
      author={Dubins, Lester~E.},
      author={Spanier, Edwin~H.},
       title={How to cut a cake fairly},
        date={1961},
     journal={The American Mathematical Monthly},
      volume={68},
      number={1P1},
       pages={1\ndash 17},
}

\bib{fink1964note}{article}{
      author={Fink, AM},
       title={A note on the fair division problem},
        date={1964},
     journal={Mathematics Magazine},
      volume={37},
       pages={341\ndash 242},
}

\bib{lorenc2013darboux}{article}{
      author={Lorenc, Piotr},
      author={Witu{\l}a, Roman},
       title={Darboux property of the nonatomic $\sigma$-additive positive and
  finite dimensional vector measures},
        date={2013},
     journal={Zeszyty Naukowe. Matematyka Stosowana/Politechnika
  {\'S}l{\k{a}}ska},
}

\bib{procaccia2015cake}{inproceedings}{
      author={Procaccia, Ariel~D.},
       title={Cake cutting algorithms},
organization={Citeseer},
        date={2015},
   booktitle={Handbook of computational social choice, chapter 13},
}

\bib{robertson1998cake}{book}{
      author={Robertson, Jack},
      author={Webb, William},
       title={Cake-cutting algorithms: Be fair if you can},
   publisher={CRC Press},
        date={1998},
}

\bib{shishido1999mark}{article}{
      author={Shishido, Harunor},
      author={Zeng, Dao-Zhi},
       title={Mark-choose-cut algorithms for fair and strongly fair division},
        date={1999},
     journal={Group Decision and Negotiation},
      volume={8},
      number={2},
       pages={125\ndash 137},
}

\bib{steihaus1948problem}{article}{
      author={Steihaus, Hugo},
       title={The problem of fair division},
        date={1948},
     journal={Econometrica},
      volume={16},
       pages={101\ndash 104},
}

\bib{woodall1986note}{article}{
      author={Woodall, Douglas~R.},
       title={A note on the cake-division problem},
        date={1986},
     journal={Journal of Combinatorial Theory, Series A},
      volume={42},
      number={2},
       pages={300\ndash 301},
}

\end{biblist}
\end{bibdiv}

\end{document}